\titleformat{\section}[block]{\bfseries\large}{\thesection. }{2pt}{}
\theoremstyle{definition}
\newtheorem{theorem}{Theorem}[section]
\newtheorem{definition}[theorem]{Definition}
\newtheorem{proposition}[theorem]{Proposition}
\newtheorem{lemma}[theorem]{Lemma}
\begin{document}
\thispagestyle{empty}
\begin{center}
\textbf{{\Large General One-dimensional Clifford Fourier Transform\\
and Applications to Probability Theory}}
\end{center}

\vspace{1cm}

\begin{center}
\textbf{Said FAHLAOUI}\footnote{Corresponding Author: s.fahlaoui@umi.ac.ma}$^,$\addtocounter{footnote}{+1}\footnotemark $\;$ and $\;$ \addtocounter{footnote}{-2}\textbf{Hakim MONAIM}\footnote{monaim.hakim@edu.umi.ac.ma}$^,$\footnote{Faculty of Sciences, Moulay Ismail University of Meknès, Morocco.}
\end{center}

\leftskip=2cm

\vspace{3cm}

 \noindent\textbf{Abstract.}
 In this paper, we present the general one-dimensional Clifford Fourier Transform. We derive fundamental properties: Plancherel theorem, reconstruction and convolution formulas. Additionally, we provide applications to probability theory illustrating results with examples.
  \\
 \textbf{Keywords.} Clifford Geometric Algebra; Clifford Fourier Transform; Plancherel-Parseval theorems; Probability Theory.\\
 \\
 \textbf{2020 Mathematics Subject Classification.} 42B10, 30G35, 60B11.
 
\leftskip=0cm 

\vspace*{4cm}

\section{Introduction}

\quad Clifford algebra, also known as geometric algebra, is a mathematical stricture that extends the principles of complex numbers and vectors to higher dimensions. The idea was first introduced by the famous mathematician of the late $19^{th}$ century William Kingdon Clifford \cite{clifford1878applications}. The concept of Clifford algebra is based on the notion of a geometric product between vectors, which provides a robust tool for analyzing geometric objects in higher dimensions. Its application is far-reaching, spanning fields such as physics, engineering, and computer science.

One of the significant strengths of Clifford algebra is its ability to unify and simplify many areas of mathematics and physics, including linear algebra, differential geometry, electromagnetism, and quantum mechanics. In \cite{ref2}, authors aim to connect Clifford algebras, manifolds and harmonic analysis, and to demonstrate the fundamental role of algebra, geometry, and differential equations in Euclidean Fourier analysis. They also combined the representation theory of Euclidean space with the representation theory of semisimple Lie groups.

Several works have explored the applications of Clifford algebra in signal processing. For instance, in \cite{brackx2009fourier} Bracx et al. introduces the new Clifford-Fourier transform, with a focus on the 2D case. Todd ell proposes the Fourier transform over the algebra of quaternions $(\mathbb{H}\simeq C\ell_{0,2})$ in \cite{ref8}, called quaternionic Fourier transform (QFT), which he explores to analyze systems described by partial differential equations. Hitzer has also made significant contributions to the development of this theory. In his works, he examines the different forms of the quaternionic Fourier transform (QFT) and explores its application to quaternion fields, providing corresponding Plancherel theorems \cite{ref12}. He also derived a new directional uncertainty principle for quaternion-valued functions using the quaternionic Fourier transform in \cite{ref13}, and extends it to establish similar principles in Clifford geometric algebras with quaternion subalgebras. In \cite{ref14}, he explains the orthogonal planes split (OPS) of quaternions based on the choice of one or two linearly independent pure unit quaternions and systematically generalizes the quaternionic Fourier transform applied to quaternion fields to conform with the OPS, establishing inverse transformations and commenting on their geometric meaning. He generalized, in his chapters \cite{ref17,ref26}, the aforementioned split (OPS) to a freely steerable orthogonal 2D-planes split of two orthonormal and collinear pure unit quaternions. This general form of OPS allows new geometric interpretations of the action of the QFT on the signals. In their works \cite{lounesto1986clifford,ref4}, P. Lounesto and Bracx et al. provide a historical review of the development and applications of quaternion and Clifford algebra wavelets.

In addition to the above, Bahri et al. introduced, in \cite{bahri2019one,one}, the one-dimensional quaternion Fourier transform and have established its properties which generalizes the Fourier transform and studied its application in probability theory. The works on Clifford algebra, therefore, have significant implications in several fields, and their continued exploration promises to unlock further insights and advancements.
\section{The Clifford Geometric Algebra} \label{sec2}

The Clifford geometric algebra $C\ell(\mathbb{R}^{p, q})=C\ell_{p, q}$  over the $\mathbb{R}-$linear space $\mathbb{R}^{p,q}$, is a non-commutative algebra generated by the $\mathbb{R}^{p,q}$-orthonormal vector basis $\mathscr{B} = \{e_1,\cdots,e_n\}$ (with $p+q=n$) obeying to the following associative non-commutative geometric multiplication rules (see \cite{ref2,ref17})
\begin{align}\label{1}
e_\ell e_k + e_k e_\ell = 2\delta_{\ell,k}\varepsilon_{_\ell}
\end{align}
where $\delta_{\ell,k}$ is the Kronecker symbol and
\begin{align}\label{1.1}
\varepsilon_{_\ell} = \left[  \mathbb{1}_{[\![1,p]\!]}(\ell) - \mathbb{1}_{[\![p+1,n]\!]}(\ell)\right].
\end{align}
The Clifford geometric algebra $C\ell_{p, q}$ can be split into the following direct sum \cite{lounesto1986clifford,ref17}
\begin{align}\label{2}
C\ell_{p, q} = \bigoplus_{\ell =0}^nC\ell_{p, q}^\ell
\end{align}
where $C\ell_{p, q}^\ell$ denotes the space spanned by the $\ell$-vectors family
\begin{align}\label{3}
 \mathscr{B}_\ell = \{ e_{\sigma_1}e_{\sigma_2}\cdots e_{\sigma_\ell},\qquad 1\leq \sigma_1< \sigma_2<\cdots <\sigma_\ell\leq n \}.
 \end{align}
Therefore, the set
\begin{align}\label{4}
\{ e_{_\Sigma} = e_{\sigma_1}e_{\sigma_2}\cdots e_{\sigma_\ell}, \qquad \Sigma \subseteq[\![1,n]\!], \quad 1\leq \sigma_1< \sigma_2<\cdots <\sigma_\ell\leq n\}\cup \{e_\emptyset = 1\}
\end{align}
forms a graded (blade) basis of $C\ell_{p, q}$. The grades $\ell$ range from 0 for scalars,
1 for vectors, 2 for bivectors, $\ell$ for $\ell$-vectors, up to n for pseudo-scalars. The the field $\mathbb{R}$ (resp. $\mathbb{R}$-linear space $\mathbb{R}^{p,q}$) is included in $C\ell_{p, q}$ as the subset of $0$-vectors (resp. $1$-vectors).\\
 The Clifford product (\ref{1}) generates a basis for $C\ell_{p,q}$ consisting of $2^n$ elements. A general element $\mathcal{C}$ of $ C\ell_{p, q}$ (called Clifford numbers, multivectors or hypercomplex numbers) is a real linear combination of basis blades $ (e_{_\Sigma})_{_\Sigma} $ and can be expanded as \cite{ref17}
\begin{align}\label{5}
 \mathcal{C} = \sum_{\Sigma\subseteq [\![1,n]\!]}\mathcal{C}_{_\Sigma} e_{_\Sigma} = \overbrace{\mathcal{C}_\emptyset}^{\text{scalar part}} + \overbrace{\sum_{\ell\in [\![1,n]\!]}\mathcal{C}_\ell e_\ell}^{\text{vector part}} + \overbrace{\sum_{1\leq \ell,k\leq n}\mathcal{C}_{\ell k} e_\ell e_k}^{\text{bivector part}} +\cdots+ \overbrace{\mathcal{C}_{12\cdots n} e_1e_2\cdots e_n}^{\text{pseudo-scalar part}} 
\end{align}
where $\mathcal{C}_{_\Sigma}$ are real-valued coefficients. $\mathcal{C}$ can also be written as
\begin{align}\label{6}
 \mathcal{C} = \sum_{\ell=0}^n \langle \mathcal{C} \rangle_{_\ell} = \langle \mathcal{C} \rangle_{_0} + \langle \mathcal{C} \rangle_{_1} +\cdots+ \langle \mathcal{C} \rangle_{_n}
\end{align}
with $ \langle \mathcal{C} \rangle_{_\ell} = \sum_{\#\Sigma=\ell} \mathcal{C}_{_\Sigma}e_{_\Sigma} $ denotes the $\ell$-vectors part of $\mathcal{C}$. As examples, $ \langle \mathcal{C} \rangle_{_0} $ denotes the scalar part, $ \langle \mathcal{C} \rangle_{_1} $ the vector part, $ \langle \mathcal{C} \rangle_{_2} $ the bi-vector part and $ \langle \mathcal{C} \rangle_{_n} $ the pseudo-scalar part.\\
The principal reverse of a multi-vector $\mathcal{C}\in C\ell(p, q)$ is defined as \cite{lounesto1986clifford,ref17}
\begin{align}\label{7}
 \widetilde{\mathcal{C}} = \sum_{\ell=0}^n (-1)^{\frac{\ell(\ell-1)}{2}}\overline{\langle \mathcal{C} \rangle_{_\ell}} 
\end{align}
where $\quad \overline{ \mathcal{C}} \quad $ means to change in the basis decomposition of $\quad \mathcal{C} \quad $ the sign of every vector of negative square $\quad \overline{e_{_\Sigma}} = \varepsilon_{_{\sigma_1}}e_{\sigma_1}\varepsilon_{_{\sigma_2}}e_{\sigma_2}\cdots\varepsilon_{_{\sigma_\ell}}e_{\sigma_\ell} \quad $ where $\; 1\leq \sigma_1< \sigma_2<\cdots<\sigma_\ell\leq n$ and $\; \varepsilon_{_{\sigma_k}} \; $ is given by (\ref{1.1}).\\
The principal reverse is linear, involution and anti-automorphic, that is for all $\mathcal{C}, \mathcal{D} \in C\ell_{p, q}$ 
\begin{align}\label{8}
 \widetilde{\mathcal{C}+\mathcal{D}} = \widetilde{\mathcal{C}} + \widetilde{\mathcal{D}}, \qquad \widetilde{\widetilde{\mathcal{C}}} = \mathcal{C}, \qquad \widetilde{\mathcal{C}\mathcal{D}} = \widetilde{\mathcal{D}} \widetilde{\mathcal{C}}.
\end{align}
The scalar product of $\mathcal{C}, \mathcal{D} \in C\ell_{p, q}$ can be defined by \cite{ref17}
\begin{align}\label{9}
 \mathcal{C}*\widetilde{\mathcal{D}} = \langle\mathcal{C}\widetilde{\mathcal{D}}\rangle_{_0} = \sum_{\Sigma\subseteq [\![1,n]\!]} \mathcal{C}_{_\Sigma}\mathcal{D}_{_\Sigma}. 
\end{align}
In particular, if $ \mathcal{C}=\mathcal{D} $, then the modulus of a multi-vector $ \mathcal{C}\in C\ell_{p, q} $ is given by \cite{lounesto1986clifford,ref17}
\begin{align}\label{10}
 |\mathcal{C}| = \sqrt{\langle \mathcal{C}\widetilde{\mathcal{C}} \rangle_0} = \sqrt{\sum_{\Sigma\subseteq [\![1,n]\!]} \mathcal{C}_{_\Sigma}^2} .
\end{align}
For $\mathcal{C}, \mathcal{D} \in C\ell_{p, q}\quad (p+q=n\geq 3)$, the following property holds \cite{donoho}
\begin{align}\label{11}
 |\mathcal{C}\mathcal{D}| \leq 2^n|\mathcal{C}||\mathcal{D}| .
\end{align}
\noindent Inner product on the square-integrable Clifford geometric algebra valued-function space $ f,g \in L^2(\mathbb{R},C\ell_{p, q}) $ is defined as follow
\begin{align}\label{13}
(f,g)_{L^2(\mathbb{R},C\ell_{p, q})} = \int_\mathbb{R}f(x)\overline{g(x)}dx .
\end{align}
For $ f=g $, we get the $ L^2(\mathbb{R},C\ell_{p, q})$-norm as
\begin{align}\label{14}
\|f\|_{L^2(\mathbb{R},C\ell_{p, q})}^2 = \int_\mathbb{R}|f(x)|^2dx .
\end{align}
\section{The general one-dimensional Clifford Fourier transform}
Let’s denote $\mathbb{C}_\mu$ the 2D sub-plane of $ C\ell_{p, q} $ spanned by $\{1,\mu\}$, where $\mu \in C\ell_{p, q}\; \text{and} \; \mu^2 = -1$
\begin{align}\label{11.1}
\mathbb{C}_\mu = span\{1,\mu\} = \{a+b\mu \; , \quad a,b \in \mathbb{R}  \}
\end{align}
$\mathbb{C}_\mu$ is an algebraically closed commutative field isomorphic to the complex plane $\mathbb{C}$. Each hypercomplexe number $q\in \mathbb{C}_\mu$ can be written, in the polar form, as \cite{donoho}
\begin{align}\label{12}
 q = |q| e^{\theta\mu} = |q|\left(\cos\theta + \mu\sin\theta\right) .
\end{align}
\begin{definition}
Let $\mu\in C\ell_{p, q}\quad \text{with} \quad \mu^2 = -1$. The general one-dimensional Clifford Fourier transform (1DCFT) of $ f\in L^1(\mathbb{R},C\ell_{p, q}) $, with respect to $\mu$ is given by
\begin{align}\label{15}
 \mathcal{F}^\mu(f)(\xi) = \int_\mathbb{R}f(x)e^{\mu x\xi} dx
\end{align}
where $x,\xi \in \mathbb{R} $.
\end{definition}
\noindent If we use the $C\ell_{p, q}$-basis expansion; $ f = \sum_{\Sigma\subseteq [\![1,n]\!]} e_{_\Sigma}f_{_\Sigma}$, 1DCFT of $f$ becomes
\begin{align}\label{15.}
 \mathcal{F}^\mu(f)(\xi) = \sum_{\Sigma\subseteq [\![1,n]\!]} e_{_\Sigma}\mathcal{F}^\mu(f_{_\Sigma})(\xi).
\end{align}
We present in the following fundamental properties of the 1DCFT, for their proofs and more comprehensive analysis, refer to \cite{bahri2019one,hitzer2012clifford,hitzer2008clifford},
\begin{itemize}
\item[-] For all $\mathcal{C},\mathcal{D} \in C\ell_{p, q}$ and $f,g \in L^1(\mathbb{R},C\ell_{p, q})$ we get
\begin{align}\label{16}
 \mathcal{F}^\mu (\mathcal{C}f + \mathcal{D}g)(\xi) = \mathcal{C}\mathcal{F}^\mu (f)(\xi) + \mathcal{D}\mathcal{F}^\mu (g)(\xi).
\end{align}

\item[-] For all $f \in L^1(\mathbb{R},C\ell_{p, q})$, and $ h \in \mathbb{R} $ we have
\begin{align}\label{16}
 \mathcal{F}^\mu(\tau_{_h}f)(\xi) = \mathcal{F}^\mu (f)(\xi) e^{-\mu h\xi}
\end{align}
where the translation operator is given by $ \tau_{_h}f(x) = f(x+h) $.\\

\item[-] For all $f,\mathcal{F}^\mu(f) \in L^1(\mathbb{R},C\ell_{p, q})$, f is recovered from its Fourier transform as
\begin{align}\label{17}
 f(x) = \frac{1}{2\pi}\int_{\mathbb{R}}\mathcal{F}^\mu(f)(\xi)e^{-\mu \xi x} d\xi .
\end{align}

\item[-] For all $f,\mathcal{F}^\mu(f) \in L^2(\mathbb{R},C\ell_{p, q})$, Parseval’s identity holds
\begin{align}\label{18}
 2\pi \|f\|_2 = \|\mathcal{F}^\mu(f)\|_2 .
\end{align}

\item[-] For $f \in \mathscr{C}^m(\mathbb{R},C\ell_{p, q})$
\begin{align}\label{22}
 \mathcal{F}^\mu\left(\frac{d^m}{dx^m}f\right)(\xi) = \mathcal{F}^\mu\left(f\right)(\xi)(-\mu\xi)^m.
\end{align}
\end{itemize}
\begin{definition}
The convolution of two Clifford algebra valued functions $f,g \in L^1(\mathbb{R},C\ell_{p, q})$ is defined by
\begin{align}\label{19}
 f*g(y) = \int_\mathbb{R}f(x)g(y-x)dx.
\end{align}
\end{definition}
\begin{proposition} Let $f,g \in L^1(\mathbb{R},C\ell_{p, q})$. We have
\begin{align}\label{20}
\mathcal{F}^\mu(f*g)(\xi) = \sum_{\Sigma\subseteq [\![1,n]\!]}\mathcal{F}^\mu(fe_{_\Sigma})(\xi)\mathcal{F}^\mu(g_{_\Sigma})(\xi).
\end{align}
\end{proposition}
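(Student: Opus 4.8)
The plan is to start from the definition of the 1DCFT applied to the convolution, expand it as an iterated integral, and then exploit the commutativity of the sub-field $\mathbb{C}_\mu$ together with the reality of the scalar components of $g$ to factor the integral. Concretely, I would first write
\[
\mathcal{F}^\mu(f*g)(\xi) = \int_\mathbb{R}\left(\int_\mathbb{R}f(x)g(y-x)\,dx\right)e^{\mu y\xi}\,dy,
\]
and invoke Fubini's theorem to treat this as an integral over $\mathbb{R}^2$; the hypotheses $f,g\in L^1(\mathbb{R},C\ell_{p,q})$ together with the product bound (\ref{11}) guarantee absolute integrability, so interchanging the order of integration is legitimate.

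Next I would perform the change of variables $t=y-x$ (so $y=x+t$, $dy=dt$), which decouples the two arguments and yields
\[
\mathcal{F}^\mu(f*g)(\xi) = \int_\mathbb{R}\int_\mathbb{R}f(x)g(t)\,e^{\mu(x+t)\xi}\,dx\,dt.
\]
The crucial algebraic fact is that $e^{\mu(x+t)\xi}=e^{\mu x\xi}e^{\mu t\xi}$: since both exponents lie in the commutative field $\mathbb{C}_\mu$ described in (\ref{11.1}), the usual exponential law holds. This lets me split the kernel, but the two exponentials cannot simply be collected next to their respective functions, because $e^{\mu x\xi}$ sits to the right of the multivector $g(t)$ and does not commute with it in the non-commutative algebra $C\ell_{p,q}$.

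To overcome this I would expand $g$ in the graded basis, $g(t)=\sum_{\Sigma\subseteq[\![1,n]\!]}e_{_\Sigma}g_{_\Sigma}(t)$, where the coefficients $g_{_\Sigma}(t)$ are real-valued. The point is that each $g_{_\Sigma}(t)$ is a scalar and therefore commutes with $e^{\mu x\xi}$, so that
\[
f(x)g(t)\,e^{\mu x\xi}e^{\mu t\xi} = \sum_{\Sigma\subseteq[\![1,n]\!]}f(x)e_{_\Sigma}\,e^{\mu x\xi}\,g_{_\Sigma}(t)\,e^{\mu t\xi}.
\]
Because the factor $g_{_\Sigma}(t)e^{\mu t\xi}$ now depends only on $t$ and $f(x)e_{_\Sigma}e^{\mu x\xi}$ only on $x$, the double integral separates into a product of two one-dimensional integrals, which are precisely $\mathcal{F}^\mu(fe_{_\Sigma})(\xi)$ and $\mathcal{F}^\mu(g_{_\Sigma})(\xi)$. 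Summing over $\Sigma$ then gives the asserted formula.

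The main obstacle is conceptual rather than computational: it is the non-commutativity of $C\ell_{p,q}$, which forbids the classical identity $\mathcal{F}^\mu(f*g)=\mathcal{F}^\mu(f)\mathcal{F}^\mu(g)$. Isolating the real scalar components $g_{_\Sigma}$ — the only pieces that commute past $e^{\mu x\xi}$ — is exactly what forces the basis element $e_{_\Sigma}$ to be absorbed into the first transform as $fe_{_\Sigma}$ and produces the sum over $\Sigma$ in the final expression. The remaining care is merely in tracking the order of the factors throughout and in justifying the interchange of the finite summation with integration, both of which are routine once the commutativity structure has been pinned down.
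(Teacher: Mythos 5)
Your proposal is correct and takes essentially the same route as the paper's proof: expand $g$ in the blade basis, use the reality of the scalar components $g_{_\Sigma}$ to commute them past the exponential, split the kernel via the exponential law in the commutative plane $\mathbb{C}_\mu$, and absorb $e_{_\Sigma}$ into $\mathcal{F}^\mu(fe_{_\Sigma})$. The only cosmetic differences are your explicit change of variables $t=y-x$ and the Fubini justification, where the paper instead keeps the nested integral and recognizes the inner one as $\mathcal{F}^\mu(g_{_\Sigma})(\xi)$ directly.
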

\begin{proof}
If we use the expansion (\ref{5}) of $g$ 
\begin{align}\label{21}
 g = \sum_{\Sigma\subseteq [\![1,n]\!]} e_{_\Sigma}g_{_\Sigma},
\end{align}
we get
\begin{align*}
\mathcal{F}^\mu(f*g)(\xi) &= \int_{\mathbb{R}} \int_\mathbb{R}f(y)g(x-y)dy e^{\mu x\xi} dx \\
&= \int_{\mathbb{R}} \int_\mathbb{R}f(y)\sum_{\Sigma\subseteq [\![1,n]\!]} e_{_\Sigma}g_{_\Sigma}(x-y)dy e^{\mu x\xi} dx \\
&= \sum_{\Sigma\subseteq [\![1,n]\!]}\int_{\mathbb{R}} f(y)e_{_\Sigma}e^{\mu y\xi}\int_\mathbb{R}g_{_\Sigma}(x-y) e^{\mu (x-y)\xi}dx dy \\
&= \sum_{\Sigma\subseteq [\![1,n]\!]}\mathcal{F}^\mu(fe_{_\Sigma})(\xi)\mathcal{F}^\mu(g_{_\Sigma})(\xi).
\end{align*}
\begin{flushright}
$\Box$
\end{flushright}
\end{proof}
\begin{lemma} (Riemann-Lebesgue)
The general one-dimensional Clifford Fourier transform (1DCFT) $\mathcal{F}^\mu$ maps $\mathbf{L}^1(\mathbb{R},C\ell_{p, q})$ into $\mathscr{C}_0(\mathbb{R},C\ell_{p, q})$ and it is one-to-one. Where $\mathscr{C}_0(\mathbb{R},C\ell_{p, q})$ is the set of continuous functions vanishing at infinity \cite{ref18}.
\end{lemma}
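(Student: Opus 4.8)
The plan is to establish three properties of $\mathcal{F}^\mu(f)$ for $f\in L^1(\mathbb{R},C\ell_{p,q})$: that it is continuous, that it vanishes at infinity, and that $\mathcal{F}^\mu$ is injective. Two elementary facts will be used repeatedly. First, $|e^{\mu x\xi}|=1$: indeed $e^{\mu x\xi}=\cos(x\xi)+\mu\sin(x\xi)$ lies in $\mathbb{C}_\mu$, and the polar form (\ref{12}) gives unit modulus. Second, the submultiplicativity bound (\ref{11}). Together they already yield $|\mathcal{F}^\mu(f)(\xi)|\le\int_{\mathbb{R}}|f(x)e^{\mu x\xi}|\,dx\le 2^{n}\int_{\mathbb{R}}|f(x)|\,dx=2^{n}\|f\|_{1}$, so $\mathcal{F}^\mu$ maps $L^1$ into the bounded functions; the real content of the lemma is the sharper membership in $\mathscr{C}_0$.

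For continuity I would fix $\xi_0\in\mathbb{R}$ and let $\xi\to\xi_0$. For each fixed $x$ the map $\xi\mapsto e^{\mu x\xi}$ is continuous, so the integrand $f(x)e^{\mu x\xi}$ converges pointwise to $f(x)e^{\mu x\xi_0}$, and it is dominated uniformly in $\xi$ by the integrable majorant $2^{n}|f(x)|$ (again by (\ref{11}) and $|e^{\mu x\xi}|=1$). Dominated convergence then gives $\mathcal{F}^\mu(f)(\xi)\to\mathcal{F}^\mu(f)(\xi_0)$, i.e.\ continuity on all of $\mathbb{R}$.

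The decay at infinity is the crux, and here I would exploit that $e^{\mu\pi}=\cos\pi+\mu\sin\pi=-1$. For $\xi\neq0$, substituting $x\mapsto x-\pi/\xi$ in (\ref{15}) and using $e^{\mu(x-\pi/\xi)\xi}=e^{\mu x\xi}e^{-\mu\pi}=-e^{\mu x\xi}$ gives $\mathcal{F}^\mu(f)(\xi)=-\int_{\mathbb{R}}f(x-\pi/\xi)e^{\mu x\xi}\,dx$. Averaging this with (\ref{15}) and recalling $\tau_{-\pi/\xi}f(x)=f(x-\pi/\xi)$ yields
\[2\,\mathcal{F}^\mu(f)(\xi)=\int_{\mathbb{R}}\bigl(f(x)-f(x-\pi/\xi)\bigr)\,e^{\mu x\xi}\,dx,\]
whence, by (\ref{11}) and $|e^{\mu x\xi}|=1$,
\[2\,\bigl|\mathcal{F}^\mu(f)(\xi)\bigr|\le 2^{n}\,\bigl\|f-\tau_{-\pi/\xi}f\bigr\|_{1}.\]
As $|\xi|\to\infty$ we have $\pi/\xi\to0$, so the right-hand side tends to $0$ by the continuity of translation in $L^1(\mathbb{R},C\ell_{p,q})$. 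Hence $\mathcal{F}^\mu(f)(\xi)\to0$, and together with continuity this places $\mathcal{F}^\mu(f)$ in $\mathscr{C}_0(\mathbb{R},C\ell_{p,q})$.

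Finally, injectivity follows from the reconstruction formula (\ref{17}) together with linearity. If $\mathcal{F}^\mu(f)=\mathcal{F}^\mu(g)$ with $f,g\in L^1$, then $\mathcal{F}^\mu(f-g)=0$; since the zero function belongs to $L^1$, the pair $f-g$, $\mathcal{F}^\mu(f-g)$ satisfies the hypotheses of (\ref{17}), giving $f-g=\frac{1}{2\pi}\int_{\mathbb{R}}0\cdot e^{-\mu\xi x}\,d\xi=0$ almost everywhere, so $f=g$. I expect the decay step to be the main obstacle: the averaging identity above reduces the whole matter to the $L^1$-continuity of translations, so the genuine work lies in that standard approximation fact, which I would either cite or prove by approximating $f$ in $L^1$-norm by compactly supported continuous functions.
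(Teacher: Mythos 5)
The first thing to say is that the paper does not prove this lemma at all: it is stated bare, with only the citation \cite{ref18}, so your argument is not paralleling a proof in the text --- it supplies one where the authors supply none. Your route is the classical one (dominated convergence for continuity; the translation--averaging trick plus $L^1$-continuity of translation for decay at infinity; the inversion formula (\ref{17}) applied to $f-g$ for injectivity), and it transfers correctly to the Clifford setting because every step uses only that $x\mapsto e^{\mu x\xi}$ takes values in the commutative plane $\mathbb{C}_\mu$, where $e^{\mu a}e^{\mu b}=e^{\mu(a+b)}$ and $e^{-\mu\pi}=-1$ are legitimate. A shorter route, and the one implicit in the paper's own expansion (\ref{15.}), is to write $\mathcal{F}^\mu(f)=\sum_{\Sigma}e_{\Sigma}\mathcal{F}^\mu(f_{\Sigma})$ with real-valued $f_{\Sigma}\in L^1(\mathbb{R})$, split $e^{\mu x\xi}=\cos(x\xi)+\mu\sin(x\xi)$, and invoke the scalar Riemann--Lebesgue lemma and scalar injectivity coefficientwise; that reduces the whole lemma to the classical real case and needs no Clifford-algebra norm estimates. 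Your proof is the more self-contained of the two; the coefficientwise one is the cheaper.

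One wrinkle you should repair: the claim $|e^{\mu x\xi}|=1$ is not automatic for a general $\mu\in C\ell_{p,q}$ with $\mu^2=-1$. With the modulus (\ref{10}), $|\cos\theta+\mu\sin\theta|^2=\cos^2\theta+|\mu|^2\sin^2\theta$, which equals $1$ only when $|\mu|=1$; in mixed signature this can fail, e.g.\ $\mu=e_1+\sqrt{2}\,e_2\in C\ell_{1,1}$ satisfies $\mu^2=-1$ but $|\mu|^2=3$. (The paper's polar form (\ref{12}) quietly makes the same unit-modulus assumption, so you inherited its sloppiness.) This does not break your argument: one still has the uniform bound $|e^{\mu x\xi}|\le\max\left(1,|\mu|\right)$, so the dominated-convergence majorant and the decay estimate survive with the constant $2^{n}\max\left(1,|\mu|\right)$ in place of $2^{n}$ --- or the issue disappears entirely if you switch to the coefficientwise reduction above. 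A second, smaller point of the same kind: (\ref{11}) is stated in the paper only for $n\ge 3$, so for $n\le 2$ you should either note that the relevant algebras ($\mathbb{C}$, $\mathbb{H}$, $M_2(\mathbb{R})$) carry submultiplicative norms or again argue componentwise.
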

\begin{theorem} (Szokefalvi-Nagy’s inequality). Let $ f\in L^2(\mathbb{R},C\ell_{p, q}) $, then
\begin{align}\label{23}
 |f(x)|^2 \leq \|f\|_2\left\|\frac{df}{dx}\right\|_2 .
\end{align}
\end{theorem}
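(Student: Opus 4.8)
The plan is to reduce the Clifford-valued estimate to the classical real-variable situation through the grade expansion, and then combine the fundamental theorem of calculus with two applications of the Cauchy--Schwarz inequality.

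First, I would expand $f=\sum_{\Sigma\subseteq[\![1,n]\!]}e_{_\Sigma}f_{_\Sigma}$ into its real-valued components $f_{_\Sigma}$, so that by (\ref{10}) the modulus is the real scalar function $|f(x)|^2=\sum_{\Sigma}f_{_\Sigma}(x)^2$. The assumption $f,\,df/dx\in L^2(\mathbb{R},C\ell_{p,q})$ forces each $f_{_\Sigma}$ and each $f_{_\Sigma}'$ into $L^2(\mathbb{R})$; hence every $f_{_\Sigma}$ admits an absolutely continuous representative. Since an $L^2$ function whose square converges at infinity must have vanishing limit there, one gets $f_{_\Sigma}(x)\to 0$ as $x\to\pm\infty$, and therefore $|f(x)|^2\to 0$ at both ends of $\mathbb{R}$.

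Second, writing $g(x):=|f(x)|^2$ with $g'(x)=2\sum_{\Sigma}f_{_\Sigma}(x)f_{_\Sigma}'(x)$, the fundamental theorem of calculus gives the two representations $g(x)=\int_{-\infty}^{x}g'(t)\,dt$ and $g(x)=-\int_{x}^{+\infty}g'(t)\,dt$, both integrals converging absolutely by Cauchy--Schwarz. Adding them halves the contribution and removes the spurious factor $2$:
\begin{align*}
|f(x)|^2=\int_{-\infty}^{x}\sum_{\Sigma}f_{_\Sigma}(t)f_{_\Sigma}'(t)\,dt-\int_{x}^{+\infty}\sum_{\Sigma}f_{_\Sigma}(t)f_{_\Sigma}'(t)\,dt.
\end{align*}

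Third, I would estimate $|f(x)|^2\leq\int_{\mathbb{R}}\big|\sum_{\Sigma}f_{_\Sigma}f_{_\Sigma}'\big|\,dt\leq\int_{\mathbb{R}}\sum_{\Sigma}|f_{_\Sigma}|\,|f_{_\Sigma}'|\,dt$, apply Cauchy--Schwarz over the finite index set of the $\Sigma$ to obtain the pointwise bound $\sum_{\Sigma}|f_{_\Sigma}|\,|f_{_\Sigma}'|\leq\big(\sum_{\Sigma}f_{_\Sigma}^2\big)^{1/2}\big(\sum_{\Sigma}(f_{_\Sigma}')^2\big)^{1/2}=|f(t)|\,|f'(t)|$, and finally Cauchy--Schwarz in $L^2(\mathbb{R})$ to conclude that $\int_{\mathbb{R}}|f|\,|f'|\,dt\leq\|f\|_2\,\|df/dx\|_2$. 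The routine parts are the two Cauchy--Schwarz steps; the genuine obstacle is the analytic groundwork, namely establishing absolute continuity and the decay $|f(x)|^2\to 0$ at infinity so that the boundary terms vanish, together with the symmetrization trick of averaging the left and right integral representations, which is exactly what produces the sharp constant $1$ rather than $2$.
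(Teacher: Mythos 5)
Your proof is correct, but it takes a genuinely different route from the paper. The paper's argument is Fourier-analytic: it writes $f$ via the inversion formula, applies Cauchy--Schwarz in the frequency domain with the weight $\rho+\xi^2$ (whose reciprocal integrates to $\pi/\sqrt{\rho}$), converts the two resulting frequency integrals back to $\|f\|_2^2$ and $\|df/dx\|_2^2$ using the Parseval identity (\ref{18}) and the derivative property (\ref{22}), and finally optimizes over the free parameter $\rho=\|f\|_2^{-2}\|df/dx\|_2^2$ to obtain the constant $1$. Your argument is instead real-variable: grade decomposition, absolute continuity of the components, the fundamental theorem of calculus with vanishing boundary terms, and two applications of Cauchy--Schwarz, with the sharp constant produced by averaging the left and right integral representations rather than by optimizing a parameter (the two tricks play exactly the same role). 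Your route is more elementary and self-contained --- it does not require the inversion formula, Parseval, or the derivative rule for $\mathcal{F}^\mu$, and it makes explicit the decay hypotheses that the paper's proof silently uses --- whereas the paper's route showcases the 1DCFT machinery, which is the thematic point of the article, and sidesteps all pointwise regularity questions. Two small remarks: first, you should record, as the paper does in its case \textit{\textbf{i}}, that the inequality holds trivially when $df/dx\notin L^2(\mathbb{R},C\ell_{p,q})$, since your argument only treats the case where both norms are finite; second, your justification of the decay at infinity should be phrased as: $\sum_{\Sigma}f_{_\Sigma}f_{_\Sigma}'\in L^1(\mathbb{R})$ by Cauchy--Schwarz, so $|f(x)|^2$ has limits at $\pm\infty$, and these limits must vanish because $|f|^2\in L^1(\mathbb{R})$ --- the version you wrote assumes the convergence it is meant to establish.
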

\begin{proof}\\
$ \textit{\textbf{i-}} $ $ \frac{df}{dx} \notin L^2(\mathbb{R},C\ell_{p, q}) $, the Szokefalvi-Nagy’s inequality abviously holds.\\
$ \textit{\textbf{ii-}} $ $ \frac{df}{dx} \in L^2(\mathbb{R},C\ell_{p, q}) $. For $ \rho > 0 $, one gets
\begin{align*}
|f(x)|^2 &= \frac{1}{4\pi^2}\left| \int_{\mathbb{R}}\mathcal{F}^\mu(f)(\xi)e^{\mu \xi x} d\xi \right|^2 \\
&\leq \frac{1}{4\pi^2} \int_{\mathbb{R}}(\rho + \xi^2) \left|\mathcal{F}^\mu(f)(\xi)\right|^2\frac{e^{2\mu \xi x}}{\rho + \xi^2}  d\xi .
\end{align*}
Cauchy-Schwartz inequality, (\ref{18}) and (\ref{22}) give
\begin{align*}
|f(x)|^2 &\leq \frac{1}{4\pi^2} \int_{\mathbb{R}}(\rho + \xi^2) \left|\mathcal{F}^\mu(f)(\xi)\right|^2d\xi \int_{\mathbb{R}}\frac{1}{\rho + \xi^2} d\xi  \\
&\leq \frac{1}{4\pi\sqrt{\rho}} \int_{\mathbb{R}}(\rho + \xi^2) \left|\mathcal{F}^\mu(f)(\xi)\right|^2d\xi \\
&\leq \frac{1}{4\pi\sqrt{\rho}} \left[\int_{\mathbb{R}}\rho \left|\mathcal{F}^\mu(f)(\xi)\right|^2d\xi + \int_{\mathbb{R}}\xi^2 \left|\mathcal{F}^\mu(f)(\xi)\right|^2d\xi \right] \\
&\leq \frac{1}{4\pi\sqrt{\rho}} \left[\int_{\mathbb{R}}2\pi\rho \left|f(x)\right|^2dx + \int_{\mathbb{R}}2\pi \left|\frac{d}{dx}f(x)\right|^2d\xi \right] \\
&\leq \frac{\sqrt{\rho}}{2} \|f\|_2^2 + \frac{1}{2\sqrt{\rho}}\left\|\frac{df}{dx}\right\|_2^2 .
\end{align*}
Setting $ \rho = \|f\|_2^{-2}\left\|\frac{df}{dx}\right\|_2^2 $, we obtain
\begin{align}\label{24}
 |f(x)|^2 \leq \|f\|_2\left\|\frac{df}{dx}\right\|_2.
\end{align}

\begin{flushright}
$\Box$
\end{flushright}
\end{proof}

\section{One-Dimensional Clifford Fourier Transform in Probability Theory}
\begin{definition}
A Clifford algebra-valued function $ f_X(x) = \sum_{\Sigma\subseteq [\![1,n]\!]}e_{_\Sigma}(f_X)_{_\Sigma}(x)  $ is called the Clifford algebra probability density function of a real random variable $X$ if $\;\forall \; \Sigma\subseteq [\![1,n]\!]$
\begin{align}\label{25}
\int_\mathbb{R}(f_X)_{_\Sigma}(x)dx = 1 \qquad \text{and} \qquad \{(f_X)_{_\Sigma}<0\} = \emptyset.
\end{align}
Here, $ (f_X)_{_\Sigma} $ is a real probability density function. The Clifford algebra cumulative distribution function is expressed as
\begin{align}\label{26}
 f_X(x) = \frac{d}{dx}F_X(x), 
\end{align}
where the probability P is related to $F_X$ given by
\begin{align}\label{27}
F_X(x) = P(X \leq x).
\end{align}
\end{definition}

\begin{definition} Let $X$ be a real random variable with the Clifford Algebra probability density function $f_X$. The $\ell^{th}$ moment of $X$ is defined as
\begin{align}\label{34}
 m_\ell = E[X^\ell] = \int_\mathbb{R}x^\ell f_X(x)dx .
\end{align}
\end{definition}
\noindent If we set
\[  f_X(x) = \sum_{\Sigma\subseteq [\![1,n]\!]}e_{_\Sigma}(f_X)_{_\Sigma}(x) \qquad \qquad \text{and} \qquad \qquad \int_\mathbb{R}x^\ell (f_X)_{_\Sigma}(x) dx := E[X_{_\Sigma}^\ell]= (m_\ell)_{_\Sigma} ,\] 
we get
\begin{align*}
m_\ell &= \int_\mathbb{R}x^\ell \sum_{\Sigma\subseteq [\![1,n]\!]}e_{_\Sigma}(f_X)_{_\Sigma}(x)  dx \\
&= \sum_{\Sigma\subseteq [\![1,n]\!]} e_{_\Sigma}\int_\mathbb{R}x^\ell (f_X)_{_\Sigma}(x) dx  \\
&= \sum_{\Sigma\subseteq [\![1,n]\!]} e_{_\Sigma}E[X_{_\Sigma}^\ell]\\
&= \sum_{\Sigma\subseteq [\![1,n]\!]} e_{_\Sigma}(m_\ell)_{_\Sigma} .
\end{align*}
\noindent It is easily seen that
\begin{align}\label{28}
 |m_\ell|^2 = E[X]\overline{E[X]} = \sum_{\Sigma\subseteq [\![1,n]\!]} (m_\ell)_{_\Sigma}^2.
\end{align}
\begin{definition} Let $X$ be a real random variable with the Clifford algebra probability density function $f_X$. The characteristic function of $X,\quad \phi_X : \mathbb{R} \longrightarrow C\ell_{p, q})$, is defined by the formula (compare with (\ref{11.1}))
\begin{align}\label{29}
 \phi_X(t) = E[e^{\mu tX}] = \int_\mathbb{R}f_X(x)e^{\mu tx} dx = \mathcal{F}^\mu(f_X)(t).
\end{align}
\end{definition}
\noindent Setting $ f_X(x) = \sum_{\Sigma\subseteq [\![1,n]\!]}e_{_\Sigma}(f_X)_{_\Sigma}(x)$, the characteristic function of $X$ can be expressed as
\begin{align*}
\phi_X(t) &= \mathcal{F}^\mu(f_X)(t) \\
&= \int_\mathbb{R}f_X(x)e^{\mu tx} dx \\
&= \int_\mathbb{R}\sum_{\Sigma\subseteq [\![1,n]\!]}e_{_\Sigma}(f_X)_{_\Sigma}(x) e^{\mu tx} dx\\
&= \sum_{\Sigma\subseteq [\![1,n]\!]}e_{_\Sigma}\int_\mathbb{R}(f_X)_{_\Sigma}(x) e^{\mu tx} dx\\
&= \sum_{\Sigma\subseteq [\![1,n]\!]}e_{_\Sigma}\int_\mathbb{R}(f_X)_{_\Sigma}(x) e^{\mu tx} dx\\
&= \sum_{\Sigma\subseteq [\![1,n]\!]}e_{_\Sigma}\mathcal{F}^\mu((f_X)_{_\Sigma})(t)\\
&= \sum_{\Sigma\subseteq [\![1,n]\!]}e_{_\Sigma}(\phi_X)_{_\Sigma}(t).
\end{align*}
By inversion formula, we get
\begin{align}\label{30}
 f_X(x) = \int_\mathbb{R}\phi_X(x)e^{-\mu tx} dx = \mathcal{F}^{-\mu}(\phi_X)(t).
\end{align}
From (\ref{29}), (\ref{26}) and Definition (\ref{22}), one gets 
\begin{align*}
\phi_X(t) &= \mathcal{F}^\mu(f_X)(t) \\
&= \mathcal{F}^\mu\left(\frac{d}{dx}F_X\right)(t) \\
&= -\mathcal{F}^\mu\left(F_X\right)(t)\mu t .
\end{align*}
Thus, for $t\neq 0$
\begin{align}\label{31}
\mathcal{F}^\mu\left(F_X\right)(t) = \frac{1}{t}\phi_X(t)\mu .
\end{align}
\begin{theorem} Let $\phi_X$ and $\psi_X$ be two Clifford Algebra characteristic functions of the random variable $X$, given by
\begin{align}\label{32}
 \phi_X(t) = \int_\mathbb{R}f_X(x)e^{\mu tx} dx \qquad \text{and} \qquad \psi_X(x) = \int_\mathbb{R}g_X(t)e^{\mu tx} dt ,
\end{align}
then
\begin{align}\label{33}
 \int_\mathbb{R}g_X(t)\phi_X(t)e^{-\mu ty} dt = \sum_{\Sigma\subseteq [\![1,n]\!]} e_{_\Sigma}f_X*(\psi_X)^\lozenge_{_\Sigma}(y),
\end{align}
with $\quad (\psi_X)^\lozenge_{_\Sigma}(x) = (\psi_X)_{_\Sigma}(-x)$
\end{theorem}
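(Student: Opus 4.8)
The plan is to reduce both sides of the asserted identity to one and the same iterated double integral, the passage between the two being guaranteed by Fubini's theorem. Starting from the left-hand side, I would substitute the definition \eqref{32} of $\phi_X$, obtaining
\[
\int_\mathbb{R} g_X(t)\,\phi_X(t)\,e^{-\mu ty}\,dt
=\int_\mathbb{R} g_X(t)\left(\int_\mathbb{R} f_X(x)\,e^{\mu tx}\,dx\right)e^{-\mu ty}\,dt .
\]
Because $f_X,g_X\in L^1(\mathbb{R},C\ell_{p,q})$ and each exponential has modulus one, the bound \eqref{11} makes the double integrand absolutely integrable, so Fubini applies. The two exponentials both lie in the commutative field $\mathbb{C}_\mu$, hence $e^{\mu tx}e^{-\mu ty}=e^{\mu t(x-y)}$, and the left-hand side collapses to $\int_\mathbb{R}\int_\mathbb{R} g_X(t)\,f_X(x)\,e^{\mu t(x-y)}\,dx\,dt$.

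For the right-hand side I would expand $g_X=\sum_\Sigma e_{_\Sigma}(g_X)_{_\Sigma}$ into its real blade components, so that $(\psi_X)_{_\Sigma}(x)=\int_\mathbb{R}(g_X)_{_\Sigma}(t)\,e^{\mu tx}\,dt$ and consequently the reflection $(\psi_X)^\lozenge_{_\Sigma}(x)=(\psi_X)_{_\Sigma}(-x)=\int_\mathbb{R}(g_X)_{_\Sigma}(t)\,e^{-\mu tx}\,dt$. Unfolding the convolution \eqref{19} together with this last formula gives
\[
f_X*(\psi_X)^\lozenge_{_\Sigma}(y)
=\int_\mathbb{R} f_X(x)\,(\psi_X)_{_\Sigma}(x-y)\,dx
=\int_\mathbb{R}\int_\mathbb{R} f_X(x)\,(g_X)_{_\Sigma}(t)\,e^{\mu t(x-y)}\,dt\,dx .
\]

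The decisive step is the reassembly under the sum over $\Sigma$. Each coefficient $(g_X)_{_\Sigma}(t)$ is real and therefore scalar, so it commutes with every Clifford factor; I may slide it to sit immediately after $e_{_\Sigma}$ and then recognise $\sum_{\Sigma\subseteq[\![1,n]\!]}e_{_\Sigma}(g_X)_{_\Sigma}(t)=g_X(t)$. Summing the previous display against $e_{_\Sigma}$ thus yields exactly $\int_\mathbb{R}\int_\mathbb{R} g_X(t)\,f_X(x)\,e^{\mu t(x-y)}\,dt\,dx$, which coincides with the expression already obtained for the left-hand side, and the identity follows. I expect the only genuine difficulty to be the non-commutative bookkeeping: since $C\ell_{p,q}$ is not commutative, $g_X(t)$ must be kept on the left of $f_X(x)$ at every stage, and the reassembly $\sum_\Sigma e_{_\Sigma}(g_X)_{_\Sigma}=g_X$ succeeds precisely because the scalars $(g_X)_{_\Sigma}$ and the $\mathbb{C}_\mu$-valued exponentials are the only factors that may be moved freely; the Fubini interchange itself is routine given $L^1$-integrability.
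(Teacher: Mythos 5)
Your proof is correct and follows essentially the same route as the paper's: expand $g_X$ into its real blade components $(g_X)_{_\Sigma}$, exploit that these scalars commute with all Clifford factors while the $\mathbb{C}_\mu$-valued exponentials combine, and recognize $\int_\mathbb{R}(g_X)_{_\Sigma}(t)e^{\mu t(x-y)}dt=(\psi_X)_{_\Sigma}(x-y)$ inside the convolution. The only differences are organizational (you reduce both sides to a common double integral rather than running one forward chain from left to right) and that you justify the Fubini interchange explicitly, which the paper leaves tacit.
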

\begin{proof} Let's expand $g_X$ on $C\ell_{p, q}$-basis; $\quad g_X(x) = \sum_{\Sigma\subseteq [\![1,n]\!]}e_{_\Sigma}(g_X)_{_\Sigma}(x)$. Forward calculations yield
\begin{align*}
\int_\mathbb{R}g_X(t)\phi_X(t)e^{\mu tx} dt &= \int_\mathbb{R}g_X(t)\left( \int_\mathbb{R}f_X(x)e^{\mu tx} dx \right) e^{-\mu ty} dt \\
&= \int_\mathbb{R}g_X(t)\left( \int_\mathbb{R}f_X(x)e^{\mu t(x-y)} dx \right) dt \\
&= \int_\mathbb{R}\sum_{\Sigma\subseteq [\![1,n]\!]}e_{_\Sigma}(g_X)_{_\Sigma}(t) \left( \int_\mathbb{R}f_X(x)e^{\mu t(x-y)} dx \right) dt \\
&= \sum_{\Sigma\subseteq [\![1,n]\!]} e_{_\Sigma}\int_\mathbb{R} f_X(x)\int_\mathbb{R}(g_X)_{_\Sigma}(t)e^{\mu t(x-y)} dtdx  \\
&= \sum_{\Sigma\subseteq [\![1,n]\!]} e_{_\Sigma}\int_\mathbb{R} f_X(x)(\psi_X)_{_\Sigma}(x-y)dx. \\
&= \sum_{\Sigma\subseteq [\![1,n]\!]} e_{_\Sigma}f_X*(\psi_X)^\lozenge_{_\Sigma}(y). 
\end{align*}
\begin{flushright}
$\Box$
\end{flushright}
\end{proof}

\begin{theorem} If $X$ is a real random variable, then there exists $\ell^{th}$ derivatives for the Clifford Algebra characteristic function $\phi_X$ which is given by the formula
\begin{align}\label{35}
 \frac{d^\ell}{dt^\ell}\phi_X(t) = \int_\mathbb{R}x^\ell f_X(x)e^{\mu tx} dx\mu^\ell .
\end{align}
Moreover
\begin{align}\label{36}
 m_\ell = E[X^\ell] = \frac{d^\ell}{dt^\ell}\phi_X(0)(-\mu)^\ell .
\end{align}
\end{theorem}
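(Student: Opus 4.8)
The plan is to obtain (\ref{35}) by differentiating the characteristic function $\phi_X(t)=\int_\mathbb{R} f_X(x)e^{\mu tx}\,dx$ under the integral sign and then to specialize at $t=0$. The essential local computation takes place inside the commutative field $\mathbb{C}_\mu$: since $e^{\mu tx}=\cos(tx)+\mu\sin(tx)$ and $\mu^2=-1$, one checks directly that $\frac{d}{dt}e^{\mu tx}=-x\sin(tx)+\mu x\cos(tx)=\mu x\,e^{\mu tx}=x\,e^{\mu tx}\mu$, the last equality using that $\mu$ commutes with every element of $\mathbb{C}_\mu$. Because $x$ is a real scalar it also commutes with the Clifford factor $f_X(x)$, so a single formal differentiation gives $\frac{d}{dt}\phi_X(t)=\int_\mathbb{R} x\,f_X(x)e^{\mu tx}\,dx\,\mu$, with the constant $\mu$ collected on the far right and pulled out of the integral by linearity.

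First I would make the interchange of derivative and integral rigorous; this is the step I expect to be the main obstacle, and it is where an implicit hypothesis must be made explicit. One needs $\int_\mathbb{R}|x|^\ell|f_X(x)|\,dx<\infty$ (finiteness of the $\ell$-th absolute moment) so that, after using that $e^{\mu tx}$ has unit modulus in $\mathbb{C}_\mu$ (equation (\ref{12})), the difference quotients are dominated by an integrable function uniformly in $t$ and the Leibniz rule applies. Working componentwise through the expansion $f_X=\sum_{\Sigma}e_{_\Sigma}(f_X)_{_\Sigma}$ reduces the matter to the classical scalar differentiation-under-the-integral theorem applied to each real coefficient $(f_X)_{_\Sigma}$, after which the $\mathbb{C}_\mu$-valued exponential is reassembled. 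Iterating $\ell$ times, each step multiplying the integrand by $\mu x$ and invoking $\mu\,e^{\mu tx}=e^{\mu tx}\mu$ to push the scalars $\mu$ to the right, yields $\frac{d^\ell}{dt^\ell}\phi_X(t)=\int_\mathbb{R}x^\ell f_X(x)e^{\mu tx}\,dx\,\mu^\ell$, which is (\ref{35}); a short induction on $\ell$ makes this precise.

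Finally I would set $t=0$. Since $e^{0}=1$, formula (\ref{35}) collapses to $\frac{d^\ell}{dt^\ell}\phi_X(0)=\bigl(\int_\mathbb{R}x^\ell f_X(x)\,dx\bigr)\mu^\ell=m_\ell\,\mu^\ell$ by the definition (\ref{34}) of the $\ell$-th moment. To solve for $m_\ell$ I use that $\mu$ is invertible with $\mu^{-1}=-\mu$ (indeed $\mu(-\mu)=-\mu^2=1$), whence $\mu^{-\ell}=(-\mu)^\ell$. Multiplying $\frac{d^\ell}{dt^\ell}\phi_X(0)=m_\ell\mu^\ell$ on the right by $(-\mu)^\ell$ cancels the factor $\mu^\ell\mu^{-\ell}=1$ and leaves $m_\ell=\frac{d^\ell}{dt^\ell}\phi_X(0)(-\mu)^\ell$, which is exactly (\ref{36}).
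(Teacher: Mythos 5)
Your proof is correct and takes essentially the same route as the paper's: differentiation under the integral sign, induction on $\ell$ (each step producing a factor $x$ in the integrand and a factor $\mu$ collected on the right), then evaluation at $t=0$ and right-multiplication by $(-\mu)^\ell$ using $\mu^\ell(-\mu)^\ell=1$. The only difference is that you spell out what the paper leaves implicit — the finiteness of the $\ell$-th absolute moment needed to justify the interchange of derivative and integral, and the commutation facts inside $\mathbb{C}_\mu$ — which strengthens rather than changes the argument.
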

\begin{proof} For $\ell = 1$, direct computations reveal that
\begin{align*}
\frac{d}{dt}\phi_X(t) &= \frac{d}{dt}\left(\int_\mathbb{R} f_X(x)e^{\mu tx} dx\right) \\
&= \int_\mathbb{R} f_X(x)\frac{d}{dt}\left(e^{\mu tx} \right)dx \\
&= \int_\mathbb{R} f_X(x)e^{\mu tx} xdx\mu .
\end{align*}
Suppose that
\begin{align}\label{37}
 \frac{d^{\ell-1}}{dt^{\ell-1}}\phi_X(t) = \int_\mathbb{R}x^{\ell-1} f_X(x)e^{\mu tx} dx\mu^{\ell-1} .
\end{align}
We have
\begin{align*}
\frac{d^\ell}{dt^\ell}\phi_X(t) &= \frac{d}{dt}\left(\frac{d^{\ell-1}}{dx^{\ell-1}}\phi_X(t)\right) \\
&= \frac{d}{dt}\left(\int_\mathbb{R}x^{\ell-1} f_X(x)e^{\mu tx} dx\mu^{\ell-1}\right) \\
&= \int_\mathbb{R}x^{\ell-1} f_X(x)\frac{d}{dt}\left(e^{\mu tx} \right)dx\mu^{\ell-1} \\
&= \int_\mathbb{R}x^\ell f_X(x)e^{\mu tx} dx\mu^\ell .
\end{align*}
Hence
\begin{align}\label{38}
 \frac{d^\ell}{dt^\ell}\phi_X(t)(-\mu)^\ell = \int_\mathbb{R}x^\ell f_X(x)e^{\mu tx} dx.
\end{align}
Then
\begin{align}\label{39}
 m_\ell = E[X^\ell] = \frac{d^\ell}{dt^\ell}\phi_X(0)(-\mu)^\ell .
\end{align}
\begin{flushright}
$\Box$
\end{flushright}
\end{proof}
\begin{definition} The variance in the Clifford Algebra setting of a real random variable $X$ is defined by
\begin{align}\label{40}
 \sigma^2 = m_2 - m_1^2 = E[X^2] - (E[X])^2 .
 \end{align}
\end{definition}
\noindent By (\ref{39}), the variance $\sigma$ of $X$ in terms of the Clifford Algebra characteristic function can be expressed as
\begin{align}\label{41}
\sigma^2 = \frac{d^2}{dt^2}\phi_X(0)(-\mu)^2 - \left[\frac{d}{dt}\phi_X(0)(-\mu)\right]^2 = \left[\frac{d}{dt}\phi_X(0)\right]^2-\frac{d^2}{dt^2}\phi_X(0) .
\end{align}
\begin{theorem} Let $X$ be a real random variable with the Clifford algebra probability density function $f_X = \sum_{\Sigma\subseteq [\![1,n]\!]}e_{_\Sigma}f_{_\Sigma}$. Then $ \forall \quad \Sigma\subseteq [\![1,n]\!] $
\begin{align}\label{41.1}
1 \leq \left\|\frac{\partial}{\partial x}\ln(f_{_\Sigma})\right\|_2^2 \left\|\xi (\phi_X)_{_\Sigma} \right\|_2^2 (m_2)_{_\Sigma}.
\end{align}
\end{theorem}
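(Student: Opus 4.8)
The plan is to prove the estimate componentwise. For a fixed $\Sigma\subseteq[\![1,n]\!]$ the coefficient $f_{_\Sigma}$ is an ordinary real probability density by (\ref{25}), and by (\ref{29}) the corresponding scalar component of the characteristic function satisfies $(\phi_X)_{_\Sigma}=\mathcal{F}^\mu(f_{_\Sigma})$. Hence every quantity in (\ref{41.1}) reduces to a statement about the single real density $f_{_\Sigma}$ and its transform, the three factors being $\|\partial_x\ln f_{_\Sigma}\|_2^2=\int_\mathbb{R}(f_{_\Sigma}'/f_{_\Sigma})^2\,dx$, $\|\xi(\phi_X)_{_\Sigma}\|_2^2=\int_\mathbb{R}\xi^2|(\phi_X)_{_\Sigma}(\xi)|^2\,d\xi$, and $(m_2)_{_\Sigma}=\int_\mathbb{R}x^2 f_{_\Sigma}\,dx$. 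The argument rests on one exact identity obtained by integrating by parts and invoking the normalisation $\int_\mathbb{R}f_{_\Sigma}\,dx=1$ from (\ref{25}), namely $\int_\mathbb{R}x\,f_{_\Sigma}'(x)\,dx=-1$, valid once $x f_{_\Sigma}(x)\to 0$ at $\pm\infty$.

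Starting from $-1=\int_\mathbb{R}(x\sqrt{f_{_\Sigma}})\,(f_{_\Sigma}'/\sqrt{f_{_\Sigma}})\,dx$, I would apply the Cauchy--Schwarz inequality to obtain the Cram\'er--Rao type bound $1\le (m_2)_{_\Sigma}\,I_{_\Sigma}$, where $I_{_\Sigma}=\int_\mathbb{R}(f_{_\Sigma}')^2/f_{_\Sigma}\,dx$ is the Fisher quantity. The next step is to replace $I_{_\Sigma}$ by the two remaining factors: a second Cauchy--Schwarz, pairing $f_{_\Sigma}'/f_{_\Sigma}$ with $f_{_\Sigma}'$, gives $I_{_\Sigma}\le\|\partial_x\ln f_{_\Sigma}\|_2\,\|f_{_\Sigma}'\|_2$. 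Finally I would pass $\|f_{_\Sigma}'\|_2$ to the frequency side: the differentiation rule (\ref{22}) yields $|\mathcal{F}^\mu(f_{_\Sigma}')(\xi)|=|\xi|\,|(\phi_X)_{_\Sigma}(\xi)|$ (using $|\mu|=1$ and that $\mathbb{C}_\mu$ is a field), whence $\|\mathcal{F}^\mu(f_{_\Sigma}')\|_2=\|\xi(\phi_X)_{_\Sigma}\|_2$, and Parseval's identity (\ref{18}) then expresses $\|f_{_\Sigma}'\|_2$ through $\|\xi(\phi_X)_{_\Sigma}\|_2$. Chaining the two Cauchy--Schwarz steps with this Parseval substitution produces the lower bound $1$ (up to the constants carried by (\ref{18})) for the product $\|\partial_x\ln f_{_\Sigma}\|_2^2\,\|\xi(\phi_X)_{_\Sigma}\|_2^2\,(m_2)_{_\Sigma}$, which is (\ref{41.1}).

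The delicate part is the bookkeeping in this final assembly. The product in (\ref{41.1}) carries three squared norms, yet only the one exact identity $\int_\mathbb{R}x f_{_\Sigma}'=-1$ is available, so the three factors must be generated by chaining two Cauchy--Schwarz inequalities through the intermediate quantity $I_{_\Sigma}$ rather than by a single estimate; one must then verify that the powers of $(m_2)_{_\Sigma}$ and the $2\pi$ factors entering through (\ref{18}) and (\ref{22}) combine so as to leave precisely the constant $1$ on the left. I would also record the regularity and decay hypotheses that legitimise the integration by parts, together with the positivity $f_{_\Sigma}>0$ needed for $\ln f_{_\Sigma}$ and for the divisions by $f_{_\Sigma}$ to be meaningful; these are the points at which the implicit assumptions on $f_{_\Sigma}$ must be made explicit.
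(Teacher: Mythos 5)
Your argument is, in substance, the paper's argument: the paper applies the Heisenberg uncertainty principle to $g_{_\Sigma}=\sqrt{f_{_\Sigma}}$, and since $\|\xi\mathcal{F}^\mu(g_{_\Sigma})\|_2^2=2\pi\|\partial_x g_{_\Sigma}\|_2^2=\frac{\pi}{2}\int_\mathbb{R}(f_{_\Sigma}')^2/f_{_\Sigma}\,dx$, that step is exactly your Cram\'er--Rao bound $1\le (m_2)_{_\Sigma}I_{_\Sigma}$ (your integration-by-parts derivation is just a self-contained proof of it); both arguments then run the same second Cauchy--Schwarz and the same Parseval/differentiation conversion. The genuine gap is precisely the step you postpone as ``bookkeeping'': it cannot be completed. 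Executed correctly, your chain gives
\begin{align*}
1\;\le\;(m_2)_{_\Sigma}\,I_{_\Sigma}\;\le\;(m_2)_{_\Sigma}\left\|\frac{\partial}{\partial x}\ln (f_{_\Sigma})\right\|_2\left\|\frac{\partial}{\partial x}f_{_\Sigma}\right\|_2\;=\;\frac{1}{\sqrt{2\pi}}\left\|\frac{\partial}{\partial x}\ln (f_{_\Sigma})\right\|_2\left\|\xi(\phi_X)_{_\Sigma}\right\|_2(m_2)_{_\Sigma},
\end{align*}
an inequality with \emph{first} powers of the two norms, whereas (\ref{41.1}) has \emph{squared} norms against a single power of $(m_2)_{_\Sigma}$. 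No rearrangement converts the former into the latter, because they scale differently: replacing $f_{_\Sigma}(x)$ by $\lambda f_{_\Sigma}(\lambda x)$ multiplies $\|\partial_x\ln (f_{_\Sigma})\|_2^2$ by $\lambda$, $\|\xi(\phi_X)_{_\Sigma}\|_2^2$ by $\lambda^3$, and $(m_2)_{_\Sigma}$ by $\lambda^{-2}$, so the right-hand side of (\ref{41.1}) scales like $\lambda^2$ while your product is scale-invariant. Taking for instance every component equal to $f_{_\Sigma}(x)=\frac{2}{\pi}(1+x^2)^{-2}$, for which all three factors are finite, and letting $\lambda\to 0$, the right-hand side of (\ref{41.1}) becomes arbitrarily small: the theorem as stated is false, so no choice of constants can close your final step.

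The reason the paper nevertheless lands on (\ref{41.1}) is that its proof applies Cauchy--Schwarz in the invalid squared form $\|uv\|_1\le\|u\|_2^2\|v\|_2^2$ when it bounds $\|\partial_x\ln(f_{_\Sigma})\,\partial_x f_{_\Sigma}\|_1$; with the correct form $\|uv\|_1\le\|u\|_2\|v\|_2$, the paper's computation collapses to exactly yours. So your proposal should be read as the corrected version of the paper's proof: what it actually establishes is the scale-invariant inequality $\sqrt{2\pi}\le\|\partial_x\ln (f_{_\Sigma})\|_2\,\|\xi(\phi_X)_{_\Sigma}\|_2\,(m_2)_{_\Sigma}$ (the precise constant depending on which Parseval normalization one adopts, since (\ref{18}) as printed and as used inside the paper's proof disagree), and not (\ref{41.1}). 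The hypotheses you insist on making explicit --- strict positivity of $f_{_\Sigma}$, decay ensuring $\int_\mathbb{R}x f_{_\Sigma}'(x)\,dx=-1$, and multiplicativity of the modulus on $\mathbb{C}_\mu$ so that $|\mathcal{F}^\mu(f_{_\Sigma}')(\xi)|=|\xi|\,|(\phi_X)_{_\Sigma}(\xi)|$ --- are indeed needed and are nowhere stated in the paper.
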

\begin{proof}
Let
\begin{align}\label{41.2}
f_X = \sum_{\Sigma\subseteq [\![1,n]\!]}e_{_\Sigma}f_{_\Sigma} = \sum_{\Sigma\subseteq [\![1,n]\!]}e_{_\Sigma}g^2_{_\Sigma}.
\end{align}
The Heisenberg uncertainty principle \cite{chandrasekharan2012classical} gives
\begin{align}\label{41.4}
\frac{\|g\|_2^4}{4} \leq \| \xi\mathcal{F}^\mu(g_{_\Sigma}) \|_2^2 \|xg_{_\Sigma}\|_2^2 .
\end{align}
We have
\begin{align}\label{41.3}
\|f\|_1 = \|g\|_2^2 = 1 ,
\end{align}
and
\begin{align}\label{41.5}
\|xg_{_\Sigma}\|_2^2 = \int_{\mathbb{R}}x^2g^2_{_\Sigma}(x)dx = \int_{\mathbb{R}}x^2f_{_\Sigma}(x)dx = (m_2)_{_\Sigma}.
\end{align}
By Parseval identity (\ref{18})
\begin{align*}
\left\|\xi\mathcal{F}^\mu\left(g_{_\Sigma}\right) \right\|_2^2 &= \left\|\mathcal{F}^\mu\left(\frac{\partial}{\partial x}g_{_\Sigma}\right) \right\|_2^2 \\
 &= 2\pi \left\|\frac{\partial}{\partial x}g_{_\Sigma} \right\|_2^2 \\
&= \frac{\pi}{2} \left\|\frac{1}{\sqrt{f_{_\Sigma}}}\frac{\partial}{\partial x}f_{_\Sigma} \right\|_2^2 \\
&= \frac{\pi}{2} \left\|\frac{\partial}{\partial x}\ln(f_{_\Sigma})\frac{\partial}{\partial x}f_{_\Sigma} \right\|_1 .
\end{align*}
Since
\begin{align*}
\left\|\frac{\partial}{\partial x}\ln(f_{_\Sigma})\frac{\partial}{\partial x}f_{_\Sigma} \right\|_1 &\leq \left\|\frac{\partial}{\partial x}\ln(f_{_\Sigma})\right\|_2^2 \left\|\frac{\partial}{\partial x}f_{_\Sigma} \right\|_2^2 \\
&\leq \frac{1}{2\pi} \left\|\frac{\partial}{\partial x}\ln(f_{_\Sigma})\right\|_2^2 \left\|\mathcal{F}^\mu\left(\frac{\partial}{\partial x}f_{_\Sigma}\right) \right\|_2^2 .
\end{align*}  
Then (\ref{41.4}) becomes
\begin{align*}
1 &\leq \left\|\frac{\partial}{\partial x}\ln(f_{_\Sigma})\right\|_2^2 \left\|\mathcal{F}^\mu\left(\frac{\partial}{\partial x}f_{_\Sigma}\right) \right\|_2^2 (m_2)_{_\Sigma} \\
&\leq \left\|\frac{\partial}{\partial x}\ln(f_{_\Sigma})\right\|_2^2 \left\|\xi\mathcal{F}^\mu\left(f_{_\Sigma}\right) \right\|_2^2 (m_2)_{_\Sigma} \\
&\leq \left\|\frac{\partial}{\partial x}\ln(f_{_\Sigma})\right\|_2^2 \left\|\xi (\phi_X)_{_\Sigma} \right\|_2^2 (m_2)_{_\Sigma} .
\end{align*}
\begin{flushright}
$\Box$
\end{flushright}
\end{proof} 

\section{Examples}
\textit{\textbf{i-}} Consider a real random variable $X$ that can occur according to a Clifford algebra uniform law
\begin{align}\label{42}
f_X(x) = \sum_{\Sigma\subseteq [\![1,n]\!]}e_{_\Sigma}\mathbb{1}_{\left[\alpha_{_\Sigma},\beta_{_\Sigma}\right]} .
\end{align}
We have
\begin{center}
 $ \mathcal{F}^\mu\left( \mathbb{1}_{\left[\alpha_{_\Sigma},\beta_{_\Sigma}\right]} \right)(t) = \left\{
\begin{aligned}
\quad & \beta_{_\Sigma} - \alpha_{_\Sigma} \hspace{4.27cm} \text{if} \quad t = 0\\
\quad & \frac{2}{t}\sin\left((\beta_{_\Sigma} - \alpha_{_\Sigma})\frac{t}{2}\right) e^{\mu(\beta_{_\Sigma} + \alpha_{_\Sigma})\frac{t}{2}} \quad \text{if} \quad t \neq 0 .
\end{aligned}
\right.$
 \end{center}
It follows from (\ref{32}) that
\begin{center}
 $ \phi_X(t) = \mathcal{F}^\mu\left(f_X\right)(t) = \left\{
\begin{aligned}
\quad & \sum_{\Sigma\subseteq [\![1,n]\!]}e_{_\Sigma}\beta_{_\Sigma} - \alpha_{_\Sigma} \hspace{4.27cm} \text{if} \quad t = 0\\
\quad & \sum_{\Sigma\subseteq [\![1,n]\!]}e_{_\Sigma}\frac{2}{t}\sin\left((\beta_{_\Sigma} - \alpha_{_\Sigma})\frac{t}{2}\right) e^{\mu(\beta_{_\Sigma} + \alpha_{_\Sigma})\frac{t}{2}} \quad \text{if} \quad t \neq 0 .
\end{aligned}
\right.$
 \end{center}
The first and second derivatives of each real-valued coefficient of $\phi_X$ are given by
 \begin{align}\label{43}
\frac{d}{dt}(\phi_X)_{_\Sigma}(0) = \mu\frac{\beta_{_\Sigma}^2-\alpha_{_\Sigma}^2}{2}, \qquad \text{and} \qquad \frac{d^2}{dt^2}(\phi_X)_{_\Sigma}(0) = \frac{\alpha_{_\Sigma}^3-\beta_{_\Sigma}^3}{3}.
\end{align}
 Then
\begin{align}\label{45}
m_1 = \frac{d}{dt}\phi_X(0)(-\mu) = \sum_{\Sigma\subseteq [\![1,n]\!]}e_{_\Sigma}\frac{\beta_{_\Sigma}^2-\alpha_{_\Sigma}^2}{2},
\end{align}
 and
\begin{align}\label{46}
m_2 = \frac{d^2}{dt^2}\phi_X(0)(-\mu)^2 = \sum_{\Sigma\subseteq [\![1,n]\!]}e_{_\Sigma}\frac{\beta_{_\Sigma}^3-\alpha_{_\Sigma}^3}{3}.
\end{align}
By (\ref{41}), we get
\begin{align}\label{47}
\sigma^2 = m_2 - m_1^2 = \sum_{\Sigma\subseteq [\![1,n]\!]}e_{_\Sigma}\frac{\beta_{_\Sigma}^3-\alpha_{_\Sigma}^3}{3} - \left( \sum_{\Sigma\subseteq [\![1,n]\!]}e_{_\Sigma}\frac{\beta_{_\Sigma}^2-\alpha_{_\Sigma}^2}{2} \right)^2 .
\end{align}
\textit{\textbf{ii-}} Let $Y$ be a real random variable that has the probability density function
\begin{align}\label{48} 
g_Y(x) = \sum_{\Sigma\subseteq [\![1,n]\!]}e_{_\Sigma}\sqrt{\frac{\lambda_{_\Sigma}}{\pi}}e^{-\lambda_{_\Sigma}x^2},
\end{align}
where $ \left(\lambda_{_\Sigma}\right)_{\Sigma\subseteq [\![1,n]\!]} $ is a finite sequence of strictly positive real numbers.\\
It follows from (\ref{32}) that
\begin{align}\label{49}
\phi_Y(t) &= \mathcal{F}^\mu\left(g_Y\right)(t) = \sum_{\Sigma\subseteq [\![1,n]\!]}e_{_\Sigma}\sqrt{\frac{\lambda_{_\Sigma}}{\pi}}\int_\mathbb{R}e^{-\lambda_{_\Sigma}x^2}e^{\mu tx} dx = \sum_{\Sigma\subseteq [\![1,n]\!]}e_{_\Sigma}e^{-\frac{t^2}{4\lambda_{_\Sigma}}}.
\end{align}
The first and second derivatives of each real-valued coefficient of $\phi_Y$ are given as
\begin{align}\label{50}
\frac{d}{dt}(\phi_Y)_{_\Sigma}(t) = -\frac{t}{2\lambda_{_\Sigma}}e^{-\frac{t^2}{4\lambda_{_\Sigma}}},
\end{align}
and
\begin{align}\label{51}
\frac{d^2}{dt^2}(\phi_Y)_{_\Sigma}(t) = \left( \frac{t^2}{4\lambda_{_\Sigma}^2} - \frac{1}{2\lambda_{_\Sigma}} \right) e^{-\frac{t^2}{2\lambda_{_\Sigma}}}.
\end{align}
Then
\begin{align}\label{52}
m_1 = \frac{d}{dt}\phi_Y(0)(-\mu) = 0,
\end{align}
and
\begin{align}\label{53}
m_2 = \frac{d^2}{dt^2}\phi_Y(0)(-\mu)^2 = \sum_{\Sigma\subseteq [\![1,n]\!]}e_{_\Sigma} \frac{1}{2\lambda_{_\Sigma}}.
\end{align}
From (\ref{50},\ref{51}), we conclude
\begin{align}\label{54}
\sigma^2 = m_2 - m_1^2 = \sum_{\Sigma\subseteq [\![1,n]\!]}e_{_\Sigma} \frac{1}{2\lambda_{_\Sigma}}.
\end{align}
\textit{\textbf{iii-}} Let $Z$ be a real random variable that has the probability density function
\begin{align}\label{55} 
h_Z(x) = \sum_{\Sigma\subseteq [\![1,n]\!]}e_{_\Sigma}\lambda_{_\Sigma}e^{-\lambda_{_\Sigma}x}\mathbb{1}_{[0,+\infty[}(x) ,
\end{align}
where $ \left(\lambda_{_\Sigma}\right)_{\Sigma\subseteq [\![1,n]\!]} $ is a finite sequence of strictly positive real numbers.\\
It follows from (\ref{32}) that
\begin{align}\label{56}
\phi_Z(t) &= \mathcal{F}^\mu\left(h_Z\right)(t) = \sum_{\Sigma\subseteq [\![1,n]\!]}e_{_\Sigma}\int_\mathbb{R}\lambda_{_\Sigma}e^{-\lambda_{_\Sigma}x}\mathbb{1}_{[0,+\infty[}(x)e^{\mu tx} dx = \sum_{\Sigma\subseteq [\![1,n]\!]}e_{_\Sigma}\frac{\lambda_{_\Sigma}}{\lambda_{_\Sigma}-\mu t}.
\end{align}
The first and second derivatives of each real-valued coefficient of $\phi_Y$ are given as
\begin{align}\label{57}
\frac{d}{dt}(\phi_Z)_{_\Sigma}(t) = \frac{\lambda_{_\Sigma}\mu}{(\lambda_{_\Sigma}-\mu t)^2},
\end{align}
and
\begin{align}\label{58}
\frac{d^2}{dt^2}(\phi_Y)_{_\Sigma}(t) = \frac{2\mu\lambda_{_\Sigma}t-2\lambda_{_\Sigma}^2}{(\lambda_{_\Sigma}-\mu t)^4}.
\end{align}
Then
\begin{align}\label{59}
m_1 = \frac{d}{dt}\phi_Y(0)(-\mu) = \sum_{\Sigma\subseteq [\![1,n]\!]}e_{_\Sigma}\frac{1}{\lambda_{_\Sigma}},
\end{align}
and
\begin{align}\label{60}
m_2 = \frac{d^2}{dt^2}\phi_Y(0)(-\mu)^2 = \sum_{\Sigma\subseteq [\![1,n]\!]}e_{_\Sigma} \frac{2}{\lambda_{_\Sigma}^2}.
\end{align}
\begin{align}\label{61}
\sigma^2 = m_2 - m_1^2 = \sum_{\Sigma\subseteq [\![1,n]\!]}e_{_\Sigma} \frac{1}{\lambda^2_{_\Sigma}}.
\end{align}
\section*{Conclusion} 
This article introduces and explores the properties of the one-dimensional Clifford Fourier transform (1DCFT), and showcases its practical application in deriving a related inequality. The effectiveness of 1DCFT in probability theory is demonstrated by examining in detail the characteristic function, expected value, and variance within the framework of Clifford algebra. These results represent an important step forward in the development of probability theory using Clifford algebra. The study recommends future research into uncertainty principles concerning the Clifford Algebra probability density function and its characteristic function.

\section*{Declarations}
\textbf{Author Contributions.} This paper is the result of a joint work between the two authors. Both of them read and approved the final manuscript and they are grateful to the editor and the anonymous referees.\\
\\
\textbf{Conflict of interest.} The authors declare that they have no competing interests.\\
\\
\textbf{Availability of data and materials.} Not applicable.\\



\end{document}